 \newcommand{\filename}{{dynamical-degrees-Vojta-24-June-2024}} 
\renewcommand{\geq}{\geqslant}
\renewcommand{\leq}{\leqslant}
\newcommand{\Osh}{{\mathcal O}}                        
\newcommand{\Vol}{\operatorname{Vol}}
\newcommand{\KK}{\mathbf{K}}
\newcommand{\PP}{\mathbb{P}} 
\newcommand{\RR}{\mathbb{R}} 
\newtheorem{theorem}{Theorem}[section]
\newtheorem{corollary}[theorem]{Corollary}
\theoremstyle{definition}
\newtheorem{remark}[theorem]{Remark}
\newtheorem{example}[theorem]{Example}
\newtheorem{problem}[theorem]{Problem}
\numberwithin{equation}{section}
\begin{document}

\author{Nathan Grieve}
\address{Department of Mathematics \& Statistics,
Acadia University, Huggins Science Hall, Room 130,
12 University Avenue,
Wolfville, Nova Scotia, B4P 2R6
Canada; 
School of Mathematics and Statistics, 4302 Herzberg Laboratories, Carleton University, 1125 Colonel By Drive, Ottawa, ON, K1S 5B6, Canada; 
D\'{e}partement de math\'{e}matiques, Universit\'{e} du Qu\'{e}bec \`a Montr\'{e}al, Local PK-5151, 201 Avenue du Pr\'{e}sident-Kennedy, Montr\'{e}al, QC, H2X 3Y7, Canada;
Department of Pure Mathematics, University of Waterloo, 200 University Avenue West, Waterloo, ON, N2L 3G1, Canada
}

\email{nathan.m.grieve@gmail.com}%

\author{Chatchai Noytaptim}

\address{
Department of Pure Mathematics, University of Waterloo, 200 University Avenue West, Waterloo, ON, N2L 3G1, Canada
}

\email{cnoytaptim@uwaterloo.ca}

\thanks{
\emph{Mathematics Subject Classification (2020):} 11J87, 14G05, 11J25, 11J68, 14F06. \\
\emph{Key Words:} Arithmetic dynamics; forward orbits; Schimdt's Subspace Theorem; Integral points \\
The first author thanks the Natural Sciences and Engineering Research Council of Canada for their support through his grants DGECR-2021-00218 and RGPIN-2021-03821. \\
\\
Date: \today.  \\
File name: \filename
}

\title[On non-Zariski density of $(D,S)$-integral points in forward orbits]{On non-Zariski density of $(D,S)$-integral points in forward orbits and the Subspace Theorem}

\begin{abstract}
Working over a base number field $\KK$, we study the attractive question of Zariski non-density for $(D,S)$-integral points in $\mathrm{O}_f(x)$ the forward $f$-orbit of a rational point $x \in X(\KK)$.  Here, $f \colon X \rightarrow X$ is a regular surjective self-map for $X$ a geometrically irreducible projective variety over $\KK$.  Given a non-zero and effective $f$-quasi-polarizable Cartier divisor $D$ on $X$ and defined over $\KK$, our main result gives a sufficient condition, that is formulated in terms of the $f$-dynamics of $D$, for non-Zariski density of certain dynamically defined subsets of $\mathrm{O}_f(x)$.  For the case of $(D,S)$-integral points, this result gives a sufficient condition for non-Zariski density of integral points in $\mathrm{O}_f(x)$.  Our approach expands on that of Yasufuku, \cite{Yasufuku:2015}, building on earlier work of Silverman \cite{Silverman:1993}.  Our main result gives an unconditional form of the main results of loc.~cit.; the key arithmetic input to our main theorem is the Subspace Theorem of Schmidt in the generalized form that has been given by Ru and Vojta in \cite{Ru:Vojta:2016} and expanded upon in \cite{Grieve:points:bounded:degree} and \cite{Grieve:qualitative:subspace}.
\end{abstract}

\maketitle

\section{Introduction}

Our purpose here, is to study the attractive problem of Zariski non-density for $(D,S)$-integral points in $\mathrm{O}_f(x)$ the forward $f$-orbit of a rational point $x \in X(\KK)$.  We work over a base number field $\KK$, with fixed choice of algebraic closure $\overline{\KK}$, and $f \colon X \rightarrow X$ is a regular surjective self-map for $X$ a geometrically irreducible projective variety over $\KK$.  

As in \cite{Meng:Zhang:2018},  we say that $f$ is \emph{polarizable} over $\KK$ if there is an ample Cartier divisor $H$ on $X$ and defined over $\KK$ which is such that, over the base change of $X$ to $\overline{\KK}$, the pullback of $H$ with respect to $f$ is linearly equivalent to some positive integral multiple of $H$.  

More generally, we say that $f$ is \emph{quasi-polarizable} over $\KK$ if there is a nef and big Cartier divisor $M$ on $X$ and defined over $\KK$ which is such that, working over $\overline{\KK}$, the pullback of $M$ with respect to $f$ is linearly equivalent to some positive integral multiple of $M$.

If $f$ is quasi-polarizable, then, by \cite[Proposition 3.6]{Meng:Zhang:2018}, $f$ is polarizable and, in particular, there is an integer $\delta_f > 1$, the \emph{dynamical degree} of $f$, which is such that 
$$f^* L \equiv_{\mathrm{lin.}} \delta_f L$$ 
for all $f$-quasi-polarizable line bundles $L$ on $X$ and defined over $\KK$.

To place matters into perspective, it is helpful to recall that the condition that $X$ admit an $f$-quasi-polarization prevents the Kodaria dimension from being positive \cite[Proposition 2.2.1]{Zhang:2006}.  (See \cite[Proposition 2.3.1]{Zhang:2006}, for example, for a more precise classification for the case that $X$ has dimension $2$.)

For a point $x \in X$, its \emph{$f$-orbit} is defined to be 
$$
\mathrm{O}_f(x) := \{ f^{(n)}(x) : n \geq 0 \} \text{.}
$$
Here $f^{(n)} \colon X \rightarrow X$ is the $n$-fold iterate of $f$.

We want to study the following attractive problem.

\begin{problem}\label{non-Zariski-density:subsets-f-orbit}
Let $f \colon X \rightarrow X$ be a regular surjective self-map for $X$ a geometrically irreducible projective variety over a number field $\KK$.
Given $x \in X$, find criteria for non-Zariski density for $(D,S)$-integral points in the \emph{$f$-orbit} $\mathrm{O}_f(x)$.
\end{problem}
   
In Problem \ref{non-Zariski-density:subsets-f-orbit}, $D$ is a non-zero effective Cartier divisor on $X$ and $S$ is a finite set of places of $\KK$ and not necessarily containing all infinite places.   The \emph{$(D,S)$-integral points} of $X$ are those points $x' \in X$ which have the property that
$$
m_S(D,x') = h_{\Osh_X(D)}(x') + \mathrm{O}(1) \text{.}
$$

Here, $m_S(D,\cdot)$ is a proximity function for $D$ with respect to $S$ and a choice of presentation for $D$ and $h_{\Osh_X(D)}(\cdot)$ is the logarithmic height function.  We refer to Section \ref{prelims}, for more details in regards to our conventions about proximity functions $m_S(D,\cdot)$, counting functions $n_S(D,\cdot)$ and logarithmic height functions.

On main result here, Theorem \ref{orbit:non:density} below, is in the direction of Problem \ref{non-Zariski-density:subsets-f-orbit}.  In particular, it gives an unconditional and more general formulation of the main results of \cite{Yasufuku:2015}.   In the formulation of the conclusion in Theorem \ref{orbit:non:density}, we refer to Section  \ref{prelims} for precise details about what we mean by \emph{reduced properly intersecting part} of the $n$th iterate of a quasi-polarizable big and nef divisor.

\begin{theorem}\label{orbit:non:density}
Working over a base number field $\KK$, let $X$ be a geometrically irreducible projective variety.  Let $f \colon X \rightarrow X$ be a regular surjective self-map with $\delta_f > 1$.  Let $L$ be a big and nef $f$-quasi-polarizable divisor on $X$.  Let $D$ be a non-zero and effective $f$-quasi-polarizable Cartier divisor on $X$ and defined over $\KK$.  Assume that $D$ is linearly equivalent to $mL$ for some $m>0$.  For $n \geq 1$, let 
$$D^{(n)} = (f^{(n)})^*D$$ 
and let $(D^{\operatorname{red}}_{\operatorname{p.i.}})^{(n)}$ be the reduced properly intersecting part of $D^{(n)}$.  Write 
$$(D^{\operatorname{red}}_{\operatorname{p.i.}})^{(n)} = D_1^{(n)}+\dots + D_{q_n}^{(n)}$$ for $D_i^{(n)}$ distinct, irreducible, reduced and properly intersecting Cartier divisors on $X$.  Assume that $D_i^{(n)}$ is linearly equivalent to $m_i^{(n)}L$ for positive integers $m_i^{(n)}$.  

Set 
$$\gamma = \gamma((D^{\operatorname{red}}_{\operatorname{p.i.}})^{(n)}) := \left( \max_i \left\{ m_i^{(n)} \right\}\right) \left( \dim X + 1\right)$$
and
$$
c_n := \frac{\sum_{i=1}^{q_n} m_i^{(n)} - \gamma}{\delta^n_f m^n} \text{.}
$$

Let $x \in X(\KK)$.  
Then for each finite set $S \subset M_{\KK}$ and all sufficiently small  $\epsilon > 0$, the set 
$$
\left\{ f^{(m)}(x) : \frac{n_S(D,f^{(m)}(x)) }{m h_L(f^{(m)}(x)) } \leq c_n - \epsilon \right\} \subseteq \mathrm{O}_f(x)
$$
is Zariski-non-dense.  
\end{theorem}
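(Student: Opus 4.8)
\emph{Proof strategy.} The plan is to pass, via the dynamical functoriality of Weil, proximity, counting and height functions, to an estimate for the proximity of the single divisor $D^{(n)}=(f^{(n)})^*D$, and then to apply the generalized Subspace Theorem of Ru and Vojta to the properly intersecting system $D_1^{(n)},\dots,D_{q_n}^{(n)}$ making up $(D^{\operatorname{red}}_{\operatorname{p.i.}})^{(n)}$. First I would dispose of the trivial case: if $x$ is $f$-preperiodic, then $\mathrm{O}_f(x)$ is finite and the conclusion is immediate; otherwise, since $f$ is polarizable and $\delta_f>1$, the canonical height attached to $f$ and a polarization gives $h_L(f^{(k)}(x))\to\infty$ as $k\to\infty$, a fact I will use at the very end.

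Now fix a finite set $S\subset M_{\KK}$ together with a presentation of $D$, and for $k\geq n$ set $P=f^{(k)}(x)$ and $Q=f^{(k-n)}(x)$, so that $f^{(n)}(Q)=P$. Functoriality along the fixed morphism $f^{(n)}$, together with $D\equiv_{\mathrm{lin.}}mL$ and hence $D^{(n)}\equiv_{\mathrm{lin.}}\delta_f^n m\,L$, yields
\[
m_S(D,P)=m_S(D^{(n)},Q)+\mathrm{O}(1),\qquad h_L(P)=\delta_f^n\,h_L(Q)+\mathrm{O}(1),
\]
with error terms uniform over the orbit since $f^{(n)}$, $D$ and $L$ are fixed. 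Combining these with the basic decomposition $h_{\Osh_X(D)}(\cdot)=m_S(D,\cdot)+n_S(D,\cdot)+\mathrm{O}(1)$ and $h_{\Osh_X(D)}=m\,h_L+\mathrm{O}(1)$ gives
\[
n_S(D,P)=\delta_f^n m\,h_L(Q)-m_S(D^{(n)},Q)+\mathrm{O}(1).
\]
So the whole problem reduces to a sufficiently strong upper bound for $m_S(D^{(n)},Q)$ in terms of $h_L(Q)$, valid for $Q$ outside a fixed proper Zariski-closed subset of $X$.

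For this, I would write $D^{(n)}=(D^{\operatorname{red}}_{\operatorname{p.i.}})^{(n)}+R_n$, where $R_n$ is an effective Cartier divisor (the reduced properly intersecting part being a reduced sub-divisor of $D^{(n)}$) with $R_n\equiv_{\mathrm{lin.}}\bigl(\delta_f^n m-\sum_i m_i^{(n)}\bigr)L$. Additivity of Weil functions over sums of effective divisors, together with the trivial estimate $m_S(R_n,Q)\leq h_{\Osh_X(R_n)}(Q)+\mathrm{O}(1)$ for effective $R_n$, gives
\[
m_S(D^{(n)},Q)\leq\sum_{i=1}^{q_n}m_S(D_i^{(n)},Q)+\Bigl(\delta_f^n m-\sum_{i=1}^{q_n}m_i^{(n)}\Bigr)h_L(Q)+\mathrm{O}(1).
\]
The divisors $D_1^{(n)},\dots,D_{q_n}^{(n)}$ are distinct, reduced, irreducible and properly intersecting, with $D_i^{(n)}\equiv_{\mathrm{lin.}}m_i^{(n)}L$ and $L$ big and nef; hence the generalized Subspace Theorem in the form of \cite{Ru:Vojta:2016}, as extended in \cite{Grieve:points:bounded:degree} and \cite{Grieve:qualitative:subspace}, applied to the $1/m_i^{(n)}$-weighted sum with the geometric constant $\dim X+1$, shows that for every $\epsilon'>0$ there is a proper Zariski-closed $Z_n=Z_n(\epsilon')\subsetneq X$ with
\[
\sum_{i=1}^{q_n}m_S(D_i^{(n)},Q)\leq(\gamma+\epsilon')\,h_L(Q)+\mathrm{O}(1)\qquad\text{for all }Q\in X(\KK)\setminus Z_n.
\]
Substituting back, and using $h_L(Q)=\delta_f^{-n}h_L(P)+\mathrm{O}(1)$, this gives for $Q\notin Z_n$ the bound
\[
n_S(D,P)\geq\Bigl(\sum_{i=1}^{q_n}m_i^{(n)}-\gamma-\epsilon'\Bigr)\delta_f^{-n}\,h_L(P)+\mathrm{O}(1).
\]

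Finally I would conclude. If $\sum_i m_i^{(n)}\leq\gamma$, then $c_n\leq0$ while $n_S(D,\cdot)$ is bounded below and $h_L(f^{(k)}(x))\to\infty$, so the subset of $\mathrm{O}_f(x)$ in the statement is finite; hence I may assume $\sum_i m_i^{(n)}>\gamma$. Dividing the last inequality by $m\,h_L(P)$, using $m\geq1$ (so that $\tfrac1m\geq\tfrac1{m^n}$) and $h_L(P)\to\infty$, one obtains, for $\epsilon'$ small enough in terms of $\epsilon$ and for all sufficiently large $k$ with $f^{(k-n)}(x)\notin Z_n$, the strict inequality $n_S(D,f^{(k)}(x))/(m\,h_L(f^{(k)}(x)))>c_n-\epsilon$. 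Consequently the exceptional set
\[
B:=\Bigl\{f^{(k)}(x):\frac{n_S(D,f^{(k)}(x))}{m\,h_L(f^{(k)}(x))}\leq c_n-\epsilon\Bigr\}
\]
is contained in the union of a finite set of orbit points, the points $f^{(k)}(x)$ with $k\leq n$, and $\{f^{(k)}(x):f^{(k-n)}(x)\in Z_n\}$; the last set lies in $f^{(n)}(Z_n)\subseteq\overline{f^{(n)}(Z_n)}$, a proper closed subvariety of $X$ since $\dim\overline{f^{(n)}(Z_n)}\leq\dim Z_n<\dim X$. Adjoining finitely many points keeps the closure a proper subvariety (the case $\dim X=0$ being vacuous), so $\overline{B}\subsetneq X$ and $B$ is Zariski-non-dense, as required. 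I expect the main obstacle to be the clean application of the generalized Subspace Theorem to the properly intersecting system and the extraction of precisely the constant $\gamma$, together with the verification that the various $\mathrm{O}(1)$ terms are genuinely uniform along the orbit; the remaining dynamical bookkeeping with the factors $\delta_f^n$ is then routine.
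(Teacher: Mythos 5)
Your proof is correct and follows essentially the same route as the paper: apply Theorem~\ref{EF:RV:Subspace} to the properly intersecting part $(D^{\operatorname{red}}_{\operatorname{p.i.}})^{(n)}$, bound the Weil function of the effective remainder $D^{(n)}-(D^{\operatorname{red}}_{\operatorname{p.i.}})^{(n)}$ by the corresponding height, transfer back along $f^{(n)}$ via functoriality of local heights, and finish with Northcott together with the fact that $f^{(n)}(Z)\subsetneq X$ remains a proper closed subset. The one noteworthy discrepancy is that you (correctly) derive $D^{(n)}\equiv_{\mathrm{lin.}}\delta_f^n m\,L$ from $D\equiv mL$ and $f^*L\equiv\delta_f L$, whereas the paper's proof writes $\delta_f^n m^n L$ to match the $m^n$ appearing in the definition of $c_n$; your remark that $1/m\geq 1/m^n$ for $m\geq 1$ is exactly what is needed to recover the stated bound from the sharper one, so your bookkeeping is in fact the more careful of the two.
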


Examples \ref{Yasufuku:2015:Eg} and \ref{Eg:2} below illustrate the novelty of Theorem \ref{orbit:non:density} and, in particular, highlight the fact that Theorem \ref{orbit:non:density} serves to give an unconditional form of the main results from \cite{Yasufuku:2015} (compare also with the main results from \cite{Silverman:1993}).  These examples also serve to illustrate, in a concrete way, that it is possible to achieve
$$
c_n > 0 \text{;}
$$
this is a key point to ensuring that the conclusion of Theorem \ref{orbit:non:density} is non-empty in practice.

\begin{example}[Compare with {\cite[Example 1]{Yasufuku:2015}}]\label{Yasufuku:2015:Eg}
Let 
$$f \colon \PP^2_{[x:y:z]} \rightarrow \PP^2_{[x:y:z]}$$ 
be defined by the condition that 
$$[x:y:z] \mapsto [y^4 + z^4 : x^3(x+y+z):yz^3].$$  Let $D$ be the divisor that is defined by the condition that $z = 0$.  Then 
$$
D^{(1)} = (f^{(1)})^* D
$$
is the divisor 
$$
yz^3 = 0
$$
and thus 
$$\delta_f = 4 \text{.}$$
On the other hand
$$D^{(2)} = (f^{(2)})^* D$$ is the divisor that is defined by the condition that 
$$x^3(x+y+z)y^3z^9 = 0.$$  
whereas $(D^{\operatorname{red}}_{\operatorname{p.i.}})^{(2)}$ is the divisor that is defined by the equation 
$$xyz(x+y+z) = 0.$$  
Thus, the conclusion of Theorem \ref{orbit:non:density} applied to $f$, $D$, $L=\Osh_{\PP^2}(1)$ and $n = 2$ with $c_2 = \frac{1}{16}$ is satisfied.
\end{example}

\begin{example}\label{Eg:2} Fix a collection of four general (and hence irreducible) degree $2$ polynomials 
$$p,q,r,s \in \KK[x,y,z]$$ 
and let 
$$f \colon \PP^2_{[x:y:z]} \rightarrow \PP^2_{[x:y:z]}$$ 
be defined by the condition that 
$$[x:y:z] \mapsto [pq:rs:x^2y^2] \text{.}$$
Let $D$ be the divisor that is defined by the condition that $z = 0$.  Then 
$$
D^{(1)} = (f^{(1)})^* D
$$
is the divisor 
$$
x^2y^2 = 0
$$
and thus 
$$\delta_f = 4 \text{.}$$
On the other hand
$$D^{(2)} = (f^{(2)})^* D$$ is the divisor that is defined by the condition that 
$$p^2q^2r^2s^2 = 0.$$  
whereas $(D^{\operatorname{red}}_{\operatorname{p.i.}})^{(2)}$ is the divisor that is defined by the equation 
$$pqrs = 0.$$  
Thus, the conclusion of Theorem \ref{orbit:non:density} applied to $f$, $D$, $L=\Osh_{\PP^2}(1)$ and $n = 2$ with $c_2 = \frac{1}{8}$ is satisfied.
\end{example}

As a further illustration of Theorem \ref{orbit:non:density} we make note that it gives a sufficient condition for non-Zariski density of $(D,S)$-integral points in forward orbits.  We make this statement precise in Corollary \ref{orbit:non:density:cor} below.

\begin{corollary}\label{orbit:non:density:cor}
In the setting of Theorem \ref{orbit:non:density}, the collection of $(D,S)$-integral points in $\mathrm{O}_f(x)$ are non-Zariski dense if $c_n>0$ for some $n>0$.
\end{corollary}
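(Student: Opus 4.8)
The plan is to derive Corollary~\ref{orbit:non:density:cor} from Theorem~\ref{orbit:non:density} by unwinding the definition of $(D,S)$-integrality and feeding in the growth of heights along an $f$-orbit. Recall from Section~\ref{prelims} that, up to a bounded error, the height splits as
$$
h_{\Osh_X(D)}(\cdot) = m_S(D,\cdot) + n_S(D,\cdot) + \mathrm{O}(1) \text{,}
$$
so that a point $x'$ is $(D,S)$-integral exactly when its counting function $n_S(D,x')$ is bounded.  Hence there is a constant $C_0 = C_0(x,D,S) \geq 0$ such that every $(D,S)$-integral point $f^{(m)}(x) \in \mathrm{O}_f(x)$ satisfies $0 \leq n_S(D,f^{(m)}(x)) \leq C_0$.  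Since $D$ is linearly equivalent to $mL$ we also have $h_{\Osh_X(D)} = m\, h_L + \mathrm{O}(1)$, which matches the shape of the ratio appearing in Theorem~\ref{orbit:non:density}.

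Next I would control $h_L$ along $\mathrm{O}_f(x)$ by means of the Call--Silverman canonical height attached to the pair $(f,L)$.  Since $f^* L \equiv_{\mathrm{lin.}} \delta_f L$ with $\delta_f > 1$, the limit $\widehat{h}_L := \lim_{m} \delta_f^{-m}\, h_L \circ f^{(m)}$ exists and satisfies $\widehat{h}_L = h_L + \mathrm{O}(1)$ together with $\widehat{h}_L \circ f = \delta_f\, \widehat{h}_L$.  If $\widehat{h}_L(x) \leq 0$, then $\widehat{h}_L(f^{(m)}(x)) = \delta_f^m \widehat{h}_L(x) \leq 0$, so $h_L(f^{(m)}(x))$ is bounded above; writing a suitable positive multiple of $L$ as $A+E$ with $A$ ample and $E \geq 0$ (possible as $L$ is big), a standard application of Northcott's theorem shows that $\mathrm{O}_f(x) \setminus \operatorname{supp}(E)$ is finite, and hence $\mathrm{O}_f(x)$ — and a fortiori its subset of $(D,S)$-integral points — is already non-Zariski dense.  (When $\dim X = 0$ there is nothing to prove, since the hypotheses on $D$ are then vacuous.)

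It remains to treat the case $\widehat{h}_L(x) > 0$, where $h_L(f^{(m)}(x)) = \delta_f^m\, \widehat{h}_L(x) + \mathrm{O}(1) \to \infty$.  For the $(D,S)$-integral points in $\mathrm{O}_f(x)$ we then obtain
$$
\frac{n_S(D, f^{(m)}(x))}{m\, h_L(f^{(m)}(x))} \; \leq \; \frac{C_0}{m\,\big(\delta_f^m\, \widehat{h}_L(x) + \mathrm{O}(1)\big)} \; \longrightarrow \; 0 \quad (m \to \infty) \text{.}
$$
Fix $n$ with $c_n > 0$ and choose $\epsilon$ with $0 < \epsilon < c_n$ small enough that Theorem~\ref{orbit:non:density} applies.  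Then the left-hand side is $\leq c_n - \epsilon$ for all $m$ sufficiently large, so that all but finitely many $(D,S)$-integral points of $\mathrm{O}_f(x)$ lie in the set
$$
\left\{ f^{(m)}(x) : \frac{n_S(D,f^{(m)}(x))}{m\, h_L(f^{(m)}(x))} \leq c_n - \epsilon \right\} \text{,}
$$
which is Zariski non-dense by Theorem~\ref{orbit:non:density}.  Since adjoining finitely many points to a non-dense subset of the irreducible variety $X$ leaves it non-dense, the collection of $(D,S)$-integral points in $\mathrm{O}_f(x)$ is non-Zariski dense, as claimed.

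The substantive content is already carried by Theorem~\ref{orbit:non:density}, so I do not expect a serious obstacle; the only points requiring care are the degenerate regime $\widehat{h}_L(x) \leq 0$, in which the orbit may be finite or trapped in a proper subvariety and $h_L$ need not grow, and the bookkeeping needed to confirm that discarding the finitely many small-index iterates does not disturb the non-density conclusion.
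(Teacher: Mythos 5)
Your proof is correct and follows essentially the same strategy as the paper's: use that $(D,S)$-integrality forces the counting function $n_S(D,\cdot)$ to be bounded, deduce that the ratio in Theorem \ref{orbit:non:density} drops below $c_n - \epsilon$ once heights grow along the orbit, and handle the bounded-height remainder with a Northcott-type argument. The one difference is that you route the bounded-height case through the Call--Silverman canonical height $\widehat{h}_L$, splitting on the sign of $\widehat{h}_L(x)$; the paper skips this extra machinery and simply cites the Northcott property for big line bundles, since the proof of Theorem \ref{orbit:non:density} has already disposed of the finite-orbit case. Both routes work, and yours has the virtue of making the degenerate case where $h_L$ stays bounded along $\mathrm{O}_f(x)$ fully explicit.
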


\begin{proof}
Recall, that the $(D,S)$-integral points $x' \in X$ are characterized by the condition that 
$$m_S(D,x') = h_{\Osh_X(D)}(x') + \mathrm{O}(1) \text{.}$$  
So, if $c_n>0$ for some $n>0$, then Theorem \ref{orbit:non:density} implies that all $(D,S)$-integral points in $\mathrm{O}_f(x)$ with sufficiently large height are non-Zariski dense.  The conclusion of Corollary \ref{orbit:non:density:cor} then follows in light of the Northcott property for big line bundles.
\end{proof}

The proof of Theorem \ref{orbit:non:density} employs closely the techniques from \cite{Yasufuku:2015}.  The key point to obtaining unconditional forms of the results proven there is an application of Theorem \ref{EF:RV:Subspace}, which is a result of some independent interest, in place of Vojta's Main Conjecture.  In turn, Theorem \ref{EF:RV:Subspace} is deduced from the Arithmetic General Theorem of Ru and Vojta (see for example \cite[Arithmetic General Theorem, p.~961]{Ru:Vojta:2016}, \cite[Theorem 1.1]{Grieve:points:bounded:degree} or \cite[Theorem 1.6]{Grieve:qualitative:subspace} for complementary formulations of that result).

\subsection*{Acknowledgements}  The first author thanks NSERC for their support through his grants DGECR-2021-00218 and RGPIN-2021-03821.  Both authors thank colleagues for their interest and discussions on related topics.  The authors thank an anonymous referee for carefully reading our article and for providing helpful comments.

\section{Notations and conventions}\label{prelims}

Our conventions about absolute values, local Weil and height functions are identical to those of \cite{Bombieri:Gubler}.  We note that a more general version of that theory, so as to allow for a theory of local Weil functions for divisors which have field of definition with respect to some finite extension of the base number field, and normalized with respect to the base number field in question,  is developed in \cite{Grieve:qualitative:subspace}.  

On the other hand, that level of generality, which is important in some applications, turns out to be not entirely well suited for our purposes here.  The reason is that we want to relate the proximity and counting function to the height function and, for that purpose, it is helpful to consider only the case of those Cartier divisors which have field of definition of the given base number field.

To describe our notations and conventions here in more precise terms, if $\KK$ is a number field, then $M_{\KK}$ denotes its set of places and $|\cdot|_v$ for $v \in M_{\KK}$ is its corresponding absolute value as described in \cite[p.~11]{Bombieri:Gubler}.  In particular the product formula is satisfied with multiplicities equal to one.

If $X$ is a geometrically irreducible projective variety over $\KK$ and if $D$ is a Cartier divisor on $X$, then $\lambda_v(D,\cdot)$ for $v \in M_{\KK}$ denotes a local Weil function for $D$ that is determined by fixing a choice of presentation for $D$.  Further, if $L = \Osh_X(D)$, then $h_L(\cdot)$ denotes the logarithmic height function \cite[Section 2.3]{Bombieri:Gubler}.   

Fixing a finite set of places $S \subset M_{\KK}$ 
$$n_S(D,\cdot) := \sum_{v \in M_{\KK} \setminus S} \lambda_v(D,\cdot)$$ 
denotes the counting function whereas 
$$m_S(D,\cdot) := \sum_{v \in S} \lambda_v(D,\cdot)$$ 
denotes the proximity function.  By this notation 
$$h_{\Osh_X(D)}(\cdot) = n_S(D,\cdot) + m_S(D,\cdot) + \mathrm{O}(1) \text{.}$$

Let $D_1,\dots,D_q$ be a collection of nonzero effective Cartier divisors on $X$ and defined over $\KK$.  As in \cite[Definition 2.1]{Ru:Vojta:2016}, we say that $D_1,\dots,D_q$ \emph{intersect properly} if for all $$I \subseteq \{1,\dots,q\}$$ and all $$x \in \bigcap_{i \in I} \operatorname{Supp}(D_i)$$ the local defining equations in the local ring $\Osh_{X,x}$ of the divisors $D_i$ form a regular sequence.

If $D$ is a non-zero effective Cartier divisor on $X$, then we let $D^{\mathrm{red}}_{\operatorname{p.i.}}$ denote its \emph{reduced properly intersecting part}.  By this we mean that writing $D$ as a sum of distinct irreducible and reduced divisors with multiplicities
$$D = \sum_{i=1}^q m_i D_i $$ 
then
$$D^{\mathrm{red}}_{\operatorname{p.i.}} \leq D$$ is the divisor 
$$D^{\mathrm{red}}_{\operatorname{p.i.}} = \sum_{j \in J} D_j$$
where 
$J \subseteq \{1,\dots,q\}$ 
is the largest subset such that the collection of divisors $D_j$ with $j \in J$ intersect properly.

This concept of reduced properly intersecting part of a Cartier divisor, which is helpful for our purposes here, can be compared with the concept of \emph{normal crossings part} from \cite{Yasufuku:2015}.

\section{Diophantine arithmetic preliminaries}\label{diohpantine:prelims}

The key Diophantine arithmetic inequality that gets used in the proof of Theorem \ref{orbit:non:density} is Theorem \ref{EF:RV:Subspace} below.  It is a generalized form of \cite[Theorem C]{Ru:Vojta:2016} (compare also with \cite[Theorems 1.1 and 1.3]{Heier:Levin:2017}).  Theorem \ref{EF:RV:Subspace} is a consequence of \cite[Arithmetic General Theorem, p.~961]{Ru:Vojta:2016} (see also, for example, \cite[Theorem 1.1]{Grieve:points:bounded:degree} and \cite[Theorem 1.6]{Grieve:qualitative:subspace} for alternative and more general forms of that result).

\begin{theorem}\label{EF:RV:Subspace}
Let $X$ be a projective variety over a number field $\KK$.  Let $D_1,\dots,D_q$ be a collection of properly intersecting effective divisors on $X$.  Let $S \subset M_{\KK}$ be a finite set of places.  Assume that there exists a big and nef line bundle $L$ on $X$ and positive integers $d_i$ such that the class of $D_i$ is linearly equivalent to the class of $d_iL$ for $i = 1,\dots,q$. 
Then for all $\epsilon > 0$, the inequality
$$
\sum_{i=1}^q \frac{1}{d_i} m_S(x,D_i) \leq \left( \dim X + 1 + \epsilon \right) h_L(x)
$$
is valid for all $\KK$-rational points outside of some proper Zariski closed subset of $X$.
\end{theorem}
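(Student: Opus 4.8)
The plan is to obtain Theorem~\ref{EF:RV:Subspace} as a consequence of the Arithmetic General Theorem of Ru and Vojta, in the multi-divisor shape recorded for instance in \cite[Theorem 1.1]{Grieve:points:bounded:degree} and \cite[Theorem 1.6]{Grieve:qualitative:subspace}; relative to \cite[Theorem C]{Ru:Vojta:2016} the present statement is a mild strengthening, in that $L$ is allowed to be merely big and nef and that ``properly intersecting'' replaces ``general position.''  The first point I would record is that these two hypotheses coincide for the purpose at hand: if $x\in\bigcap_{i\in I}\operatorname{Supp}(D_i)$, then the local equations of the $D_i$ with $i\in I$ form a regular sequence in $\Osh_{X,x}$, and a regular sequence in a Noetherian local ring has length at most the dimension of the ring, so $|I|\leq\dim\Osh_{X,x}\leq\dim X$.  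Hence no $\dim X+1$ of the supports $\operatorname{Supp}(D_i)$ have a common point, and each non-empty intersection $\bigcap_{i\in I}\operatorname{Supp}(D_i)$ has pure codimension $|I|$ --- which is precisely the position hypothesis under which the cited results are proved.

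With that in hand I would apply the cited theorem to the family $D_1,\dots,D_q$ and the big and nef line bundle $L$.  The quantity controlling the resulting bound is the Nevanlinna (or $\beta$-) constant attached to the family against $L$, and the one thing that must be checked is the volume computation showing that, for divisors that intersect properly and satisfy $D_i\equiv d_iL$, this constant takes the value $\dim X+1$: one compares $\sum_{m\geq1}h^0(X,NL-mD_i)$ with $N\,h^0(X,NL)$ as $N\to\infty$, and the proper-intersection hypothesis together with the numerical equivalences $D_i\equiv d_iL$ is exactly what makes the associated filtration of $\H^0(X,NL)$ asymptotically optimal --- the value $\dim X+1$ being the same one that governs a configuration of $\dim X+1$ or more hyperplanes in $\PP^{\dim X}$ in the classical Subspace Theorem.  (If one prefers to work from the single-divisor form of the Arithmetic General Theorem, the same conclusion is reached after the standard reduction of Autissier and Corvaja--Zannier: at each place $v\in S$, by the position statement above, $x$ is $v$-adically close to at most $\dim X$ of the $D_i$, and one builds the filtrations from those divisors.)  This yields the inequality of the statement for all $\KK$-rational points outside a proper Zariski closed subset, which may depend on $\epsilon$.

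The bigness and nefness of $L$ enter only in the estimates underlying the cited theorem: nefness supplies the asymptotics $h^0(X,NL)\sim(L^{\dim X})N^{\dim X}/(\dim X)!$, together with the analogous estimates for the twists appearing in the filtration, while bigness is what guarantees that the exceptional locus produced is a \emph{proper} Zariski closed subset, equivalently that the non-ample (augmented base) locus $\mathbb{B}_+(L)$ is a proper subset of $X$.  The genuine mathematical content is thus carried entirely by the Ru--Vojta theorem, which itself rests on the Evertse--Ferretti form of Schmidt's Subspace Theorem together with the filtration method, and the work above is essentially bookkeeping.  I expect the only two points that require any care to be: (i) the verification, sketched above, that ``properly intersecting'' delivers the precise general-position hypothesis, without which the sharp constant $\dim X+1$ could not be claimed; and (ii) the passage from the ample case (which is close to \cite[Theorem C]{Ru:Vojta:2016}) to the big and nef case, where one must invoke the appropriate Riemann--Roch asymptotics and the properness of $\mathbb{B}_+(L)$ --- and even there, as the reference \cite{Grieve:qualitative:subspace} shows, it is a matter of citing known estimates rather than of proving anything substantially new.
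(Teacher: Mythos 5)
Your proposal follows the same route as the paper: compute $\beta(L,D_i)=\frac{1}{d_i(\dim X+1)}$ via the volume integral $\int_0^\infty \Vol(L-tD_i)/\Vol(L)\,\mathrm{d}t$ and then invoke the Ru--Vojta Arithmetic General Theorem for properly intersecting divisors. One small misattribution worth flagging: the value of $\beta(L,D_i)$ is a single-divisor quantity determined by $D_i\equiv_{\mathrm{lin.}} d_iL$ together with the bigness and nefness of $L$ alone --- proper intersection plays no role in that volume computation, entering instead only as the positional hypothesis required to apply the cited theorem.
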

\begin{proof}
Using the fact that $L$ is big and nef and 
$$D_i \equiv_{\mathrm{lin.}} d_i L\text{ for $i=1,\dots q$}$$ 
we note that 
\begin{equation}\label{asymp:vanishing:constant}
\beta(L,D_i) = \beta(D_i) = \frac{1}{d_i (\dim X +1)}
\end{equation}
for
$$
\beta(L,D_i) := \liminf_{m\to \infty} \frac{\sum_{\ell \geq 1} h^0(X,mL-\ell D_i)}{m h^0(X,mL)} = \int_0^\infty \frac{\operatorname{Vol}(L-tD_i)}{\Vol(L)} \mathrm{d}t \text{.}
$$
Here $\Vol(\cdot)$ denotes the volume of an $\RR$-Cartier divisor class \cite[Definition 2.2.31]{Laz}.

The desired conclusion then follows from \cite[Arithmetic General Theorem, p. 961]{Ru:Vojta:2016}. (See also \cite[Theorem 1.1]{Grieve:points:bounded:degree} and \cite[Theorem 1.6]{Grieve:qualitative:subspace} for alternative and more general forms of that result and deduced from the Subspace Theorem in the form of \cite[Theorem 7.2.2]{Bombieri:Gubler} and \cite[Parametric Subspace Theorem, p.~515]{Evertse:Ferretti:2013}, respectively, and using our conventions about absolute values here.)
\end{proof}

\begin{remark}
The quantity \eqref{asymp:vanishing:constant} gives the expected order of vanishing of $D_i$ with respect to $L$.  Recall, that it admits a calculation via the theory of Newton-Okounkov bodies, see \cite[Theorem 1.1]{Grieve:MVT:2019} and \cite[Theorem 6.2]{Grieve:chow:approx}.  Note that these results from \cite{Grieve:MVT:2019} and \cite{Grieve:chow:approx} apply quite generally.  For instance, they apply to the case that $L$ is big as opposed to being big and nef.  Further,  each of the $D_i$ are not required to be linearly equivalent to some positive multiple of $L$.  Also for the case of inflectionary embeddings of very ample linear systems the quantity \eqref{asymp:vanishing:constant} admits a description as a normalized Chow weight \cite[Theorem 1.1]{Grieve:chow:approx}.  
\end{remark}

\begin{remark}
A more general form of Theorem \ref{EF:RV:Subspace} holds true; it allows the coefficients of the divisors $D_i$ to be defined over a finite extension of the base number field $\KK$.  Here, for our present purposes, that level of generality does not give us any additional benefit.  Indeed, the reason, is that the conclusion of Theorem \ref{orbit:non:density} is formulated in terms of the counting function $n_S(D,\cdot)$; in order to relate it to the height function $h_{\Osh_X(D)}(\cdot)$, following our normalization conventions for absolute values which are consistent, with those of \cite{Bombieri:Gubler}, \cite{Grieve:points:bounded:degree} and \cite{Grieve:qualitative:subspace}, it is helpful to have $D$ defined over $\KK$.
\end{remark}

\section{Proof of Theorem \ref{orbit:non:density}}

Our approach to proving Theorem \ref{orbit:non:density} builds on the strategy from \cite{Yasufuku:2015} and \cite{Silverman:2013}.  A key point is to replace the use of Vojta's Main Conjecture by Theorem \ref{EF:RV:Subspace}.   In particular, Theorem \ref{orbit:non:density} holds true unconditionally and without any appeal to Vojta's Main Conjecture. 

\begin{proof}[Proof of Theorem \ref{orbit:non:density}]
We apply Theorem \ref{EF:RV:Subspace}. 
The conclusion, perhaps after replacing $\epsilon > 0$ by some small $\epsilon' > 0$, if required, is that the inequality
\begin{equation}\label{eqn:1}
\sum_{v \in S} \lambda_v((D^{\operatorname{red}}_{\operatorname{p.i.}})^{(n)} , x' ) \leq ( \gamma + \epsilon ) h_L(x')
\end{equation}
is valid for all $x' \in X \setminus Z$ for $Z \subsetneq X$ some proper Zariski closed subset.

By assumption
$$D \equiv_{\mathrm{lin.}} mL \text{.}$$  
Therefore 
$$D^{(n)} \equiv_{\mathrm{lin.}} \delta_f^n m^nL$$ 
and
$$
D^{(n)} - ( D^{\operatorname{red}}_{\operatorname{p.i.}})^{(n)} \equiv_{\mathrm{lin.}} \left( \delta^n_f m^n - \sum_{i=1}^{q_n} m_i^{(n)} \right) L \text{.}
$$
Thus
\begin{equation}\label{eqn:2}
\sum_{v \in S} \lambda_v(D^{(n)} - (D^{\operatorname{red}}_{\operatorname{p.i.}})^{(n)},x') \leq \left( \delta_f^n m^n - \sum_{i=1}^{q_n} m_i^{(n)} \right) h_L(x') + \mathrm{O}(1) \text{.}
\end{equation}

Now since 
$$
\lambda_v(D,f^{(n)}(x')) = \lambda_v(D^{(n)},x') + \mathrm{O}(1)
$$
for $v \in M_{\KK}$ and since 
$$
D^{(n)} = D^{(n)} - (D^{\operatorname{red}}_{\operatorname{p.i.}})^{(n)} + (D^{\operatorname{red}}_{\operatorname{p.i.}})^{(n)}
$$
combining Equations \eqref{eqn:1} and \eqref{eqn:2} yields
\begin{equation}\label{eqn:3}
\sum_{v \in S} \lambda_v(D,f^{(n)}(x')) \leq \left(\delta_f^n m^n - \sum_{i=1}^{q_n} m_i^{(n)} + \gamma + \epsilon \right) h_L(x') + \mathrm{O}(1) \text{.}
\end{equation}

On the other hand, since
$$
h_L(x') = \frac{1}{\delta_f^n m^n} h_{\Osh_X(D)}(f^n(x')) + \mathrm{O}(1) 
$$
Equation \eqref{eqn:3} can be rewritten as
\begin{multline}\label{eqn:4}
\sum_{v \in S} \lambda_v(D,f^{(n)}(x')) \leq \left( \frac{\delta_f^n m^n - \sum_{i=1}^{q_n} m_i^{(n)}+\gamma + \epsilon }{\delta_f^n m^n} \right) h_{\Osh_X(D)}(f^{(n)}(x')) \\ + \mathrm{O}(1) \text{ for $x' \in X \setminus Z$.}
\end{multline}

Recall that we have set 
$$
c_n = \frac{ \sum_{i=1}^{q_n} m_i^{(n)} - \gamma }{\delta_f^n m^n} \text{.}
$$
Therefore Equation \eqref{eqn:4} can be re-expressed as
\begin{multline}\label{eqn:5}
\sum_{v \in S} \lambda_v(D,f^{(n)}(x')) \leq \left(1 - c_n + \frac{\epsilon}{\delta^n_f m^n} \right) h_{\Osh_X(D)}(f^{(n)}(x')) \\ + \mathrm{O}(1) \text{ for all $x' \in X \setminus Z$.}
\end{multline}

Now, for all $m \geq n$, let 
$x'= f^{(m-n)}(x)$.  Then, using Equation \eqref{eqn:5}, the conclusion is that if 
$f^{(m-n)}(x) \not \in Z$, 
then 
\begin{multline}\label{eqn:6}
\sum_{v \not \in S} \lambda_v(D,f^{(m)}(x)) \\ > \left(c_n - \frac{\epsilon}{\delta^n_f m^n} \right) h_{\Osh_X(D)}(f^{(m)}(x)) - \mathrm{O}(1) \\ > \left( c_n - \frac{\epsilon}{\delta^n_f m^n} \right) m h_L(f^{(m)}(x)) - \mathrm{O}(1).
\end{multline}

Here, in \eqref{eqn:6}, we have used the relation that 
$$
\sum_{v \in S} \lambda_v(D,f^{(m)}(x)) + \sum_{v \not \in S} \lambda_v(D,f^{(m)}(x)) = h_{\Osh_X(D)}(f^{(m)}(x)) \text{.}
$$

We finally now want to address the question of Zariski non-density of the set
\begin{equation}\label{eqn:7}
\left\{ f^{(m)}(x) : \frac{\sum_{v \not \in S} \lambda_v(D,f^{(m)}(x)) }{m h_L(f^{(m)}(x)) } \leq c_n - \epsilon \right\} \text{.}
\end{equation}

The situation is clear if $\# \mathrm{O}_f(x) < \infty$.  If $\# \mathrm{O}_f(x) = \infty$, then Northcott's Theorem for big line bundles, compare with \cite[Theorem 2.4.9]{Bombieri:Gubler}, implies, upon enlarging $Z$ if necessary, that $h_L(f^{(m)}(x)) \to \infty$ as $m \to \infty$.

So, there is the inequality 
$$
\frac{\sum_{v \not \in S} \lambda_v(D,f^{(m)}(x)) }{m h_L(f^{(m)}(x)) } > \left( c_n - \epsilon' \right) - \mathrm{O}(1)
$$
for $m \gg 0$ and sufficiently small $\epsilon' > 0$ with $f^{(m-n)}(x) \not \in Z$.

Finally, observe that if $f^{(m-n)}(x) \in Z$, then $f^{(m)}(x) \in f^{(n)}(Z)$; further $f^{(n)}(Z) \subsetneq X$ is a proper Zariski closed subset.

Therefore for sufficiently small $\epsilon > 0$, the set \eqref{eqn:7} is contained in the union of $f^{(n)}(Z)$ together possibly with a finite collection of $f$-orbit points.
\end{proof}

\providecommand{\bysame}{\leavevmode\hbox to3em{\hrulefill}\thinspace}
\providecommand{\MR}{\relax\ifhmode\unskip\space\fi MR }
\providecommand{\MRhref}[2]{%
  \href{http://www.ams.org/mathscinet-getitem?mr=#1}{#2}
}
\providecommand{\href}[2]{#2}


\begin{thebibliography}{10}

\bibitem{Bombieri:Gubler}
E.~Bombieri and W.~Gubler, \emph{Heights in {D}iophantine geometry}, Cambridge
  University Press, Cambridge, 2006.

\bibitem{Evertse:Ferretti:2013}
J.-H. Evertse and R.~G. Ferretti, \emph{A further improvement of the
  {Q}uantitative {S}ubspace {T}heorem}, Ann. of Math. (2) \textbf{177} (2013),
  no.~2, 513--590.

\bibitem{Grieve:points:bounded:degree}
N.~Grieve, \emph{On arithmetic inequalities for points of bounded degree}, Res.
  Number Theory \textbf{7} (2021), no.~1, Paper No. 1, 14.

\bibitem{Grieve:MVT:2019}
\bysame, \emph{{O}n {D}uistermaat-{H}eckman measure for filtered linear
  series}, C. R. Math. Acad. Sci. Soc. R. Can. \textbf{44} (2022), no.~1,
  16--32.

\bibitem{Grieve:chow:approx}
\bysame, \emph{Expectations, concave transforms, {C}how weights, and {R}oth's
  theorem for varieties}, Manuscripta Math. \textbf{172} (2023), no.~1--2,
  291--330.

\bibitem{Grieve:qualitative:subspace}
\bysame, \emph{On qualitative aspects of the quantitative subspace theorem},
  Rocky Mountain J. Math. (To appear).

\bibitem{Heier:Levin:2017}
G.~Heier and A.~Levin, \emph{A generalized {S}chmidt subspace theorem for
  closed subschemes}, Amer. J. Math. \textbf{143} (2021), no.~1, 213--226.

\bibitem{Laz}
R.~Lazarsfeld, \emph{Positivity in algebraic geometry {I}}, Springer-Verlag,
  Berlin, 2004.

\bibitem{Meng:Zhang:2018}
S.~Meng and D.-Q. Zhang, \emph{{B}uilding blocks of polarized endomorphisms of
  normal projective varieties}, Adv. Math. (2018), no.~325, 243--273.

\bibitem{Ru:Vojta:2016}
M.~Ru and P.~Vojta, \emph{A birational {N}evanlinna constant and its
  consequences}, Amer. J. Math. \textbf{142} (2020), no.~3, 957--991.

\bibitem{Silverman:1993}
J.~H. Silverman, \emph{{I}nteger points, {D}iophantine approximation, and
  iteration of rational maps}, Duke Math. J. \textbf{71} (1993), no.~3,
  793--829.

\bibitem{Silverman:2013}
\bysame, \emph{Primitive divisors, dynamical {Z}sigmondy sets, and {V}ojta's
  conjecture}, J. Number Theory \textbf{133} (2013), no.~9, 2948--2963.

\bibitem{Yasufuku:2015}
Y.~Yasufuku, \emph{Integral points and relative sizes of coordinates of orbits
  in {$\Bbb{P}^N$}}, Math. Z. \textbf{279} (2015), no.~3--4, 1121--1141.

\bibitem{Zhang:2006}
S.-W. Zhang, \emph{Distributions in algebraic dynamics}, Surveys in
  Differential Geometry {X} (2006), 381--430.

\end{thebibliography}
\end{document}